\theoremstyle{definition}
\newtheorem{defn}{Definition}[section]
\theoremstyle{proposition}
\newtheorem{prop}{Proposition}[section]
\theoremstyle{lemma}
\newtheorem{lemma}{Lemma}[section]
\theoremstyle{counterexample}
\newtheorem{cex}{Counterexample}[section]
\begin{document}

\title{Note on filtered colimits of Hilbert spaces}
\author{Branko Nikoli\'c and Alessandra Di Pierro}
\address{Uviversity of Verona, Verona, Italy}
\email{branik.mg@gmail.com, alessandra.dipierro@univr.it}

\date{\today}

\subjclass[2010]{18A30, 
46C05}  

\keywords{Hilbert spaces, filtered/directed colimits}

\thanks{The first author gratefully acknowledges the financial support through the Cooperint grant.}

\begin{abstract}
The category of Hilbert spaces and contractions has filtered colimits, and tensoring preserves them. We also discuss (problems with) bounded maps.
\end{abstract}

\maketitle


\section{Introduction}

While working on formally adding certain directed colimits to the fusion category of Fibonacci anyons, we noticed a gap in the literature in the formal examination of directed colimits in a more basic case -- different categories of Hilbert spaces.

Let $\mathbf{Hilb}_\mathrm{b}$, $\mathbf{Hilb}_\mathrm{c}$ and $\mathbf{Hilb}_\mathrm{r}$  denote the categories Hilbert spaces and bounded (respectively contracting, isometric) linear operators. While $\mathbf{Hilb}_\mathrm{b}$ has direct sums, $\mathbf{Hilb}_\mathrm{c}$ has $\aleph_1$-filtered colimits \cite[Proposition 3.4.2]{Makkai1989}, and $\mathbf{Hilb}_\mathrm{r}$ has ($\aleph_0$- or finitely-) filtered colimits \cite{Karvonen2019, Lieberman2019}. The statement of our main result, given in Proposition \ref{prop:main}, seems to be contained in \cite[Lemma 5.3]{Heunen2008}, but their proof uses \cite[Example 2.3.9]{Adamek1994}, which, like \cite[Proposition 3.4.2]{Makkai1989}, is about $\aleph_1-$filtered colimits. 

In Section \ref{sec:prelim} we briefly review different choices for morphisms of Hilbert spaces, and types of filtered/directed colimits. In Section \ref{sec:Hc} we present construction of  colimits of chains of contractions, which guarantees existence of arbitrary filtered colimits \cite{Adamek1994}. In Section \ref{sec:Hb} we take a look at what can be done for bounded morphisms, and what causes problems.

\section{Prelimineries}\label{sec:prelim}
The braket $\braket{x|y}$ denotes the inner product of $x$ and $y$. The norm $\vert x\vert$ of a vector $x$ equals $\sqrt{\braket{x|x}}$. 
\begin{defn}
A linear operator $G:H_1\rightarrow H_2$ between Hilbert spaces is:
\begin{itemize}
\item \textit{bounded} if $\vert Gx\vert\leq b\vert x\vert$ for some $b$ and all $x$ -- the norm of $G$, denoted $\vert G\vert$ is the smallest such $b$,
\item a \textit{contraction} if $|G|\leq 1$,
\item an \textit{isometry} if $\braket{Gx|Gx}=\braket{x|x}$, in which case $\vert G\vert=1.$
\end{itemize}
\end{defn}
Hilbert spaces, with these arrows form categories $\mathbf{Hilb}_\mathrm{b}$, $\mathbf{Hilb}_\mathrm{c}$ and $\mathbf{Hilb}_\mathrm{r}$. There are identity-on-objects inclusion $\mathbf{Hilb}_\mathrm{r}\hookrightarrow\mathbf{Hilb}_\mathrm{c}\hookrightarrow\mathbf{Hilb}_\mathrm{b}$.

\begin{defn}
A \textit{$\lambda$-directed poset} is a poset $P$ in which every subset of cardinality smaller than $\lambda$ has an upper bound. A category $\mathcal{C}$ has \textit{$\lambda$-directed colimits} if for all $\lambda$-directed posets $P$, and all functors $F:P\rightarrow \mathcal{C}$, the colimit of $F$ exists.
\end{defn}

\begin{defn}\cite[Remark 1.21]{Adamek1994}
A \textit{$\lambda$-filtered category} is a category $\mathcal{D}$ in which every subcategory, with cardinality (of arrows) smaller than $\lambda$, has a cocone. A category $\mathcal{C}$ has \textit{$\lambda$-filtered colimits} if for all $\lambda$-filtered categories $\mathcal{D}$, and all functors $F:\mathcal{D}\rightarrow \mathcal{C}$, the colimit of $F$ exists.
\end{defn}
Every filtered diagram has a directed subdiagram whose colimit, if exists, guarantees existence, and coincides with the colimit of the original diagram \cite[Theorem 1.5]{Adamek1994}.

Note that increasing $\lambda$ makes conditions on $P$ stronger, reduces the number of diagrams required to have a colimit, and increases the number of categories that are $\lambda$-directed cocomplete. Dually, decreasing $\lambda$, weakens conditions on $P$, broadens the class of required colimit diagrams, and decreases the number of categories that are $\lambda$-directed cocomplete.

In our case, a $\aleph_1$-directed diagram requires specification of an upper bound for any subset of size $\aleph_0$ (countable subset), which is not very useful if, for example, one wants to consider an $\omega$-chain of embeddings -- it would require to specify the colimit of the diagram in the diagram.

\section{Filtered colimits in $\mathbf{Hilb_c}$}\label{sec:Hc}

\begin{lemma}\label{lemma:innerBound}
If $G:H_1\rightarrow H_2$ is a linear contraction then
\begin{equation*}
\vert\braket{x|y} - \braket{Gx|Gy}\vert^2\leq
(
\braket{x|x} - \braket{Gx|Gx}
) \cdot
(
\braket{y|y} - \braket{Gy|Gy}
).
\end{equation*}
\end{lemma}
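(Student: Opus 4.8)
The plan is to read the right-hand side as a product of ``defects of isometry'' and to recognise the whole inequality as nothing but the Cauchy--Schwarz inequality for an appropriate positive semidefinite form. Concretely, I would introduce the sesquilinear form
\begin{equation*}
B(x,y) := \braket{x|y} - \braket{Gx|Gy}
\end{equation*}
on $H_1\times H_1$, which inherits the (conjugate-)linearity conventions of the two inner products and is Hermitian in the sense that $B(y,x)=\overline{B(x,y)}$. With this notation the claimed inequality is exactly $\vert B(x,y)\vert^2\le B(x,x)\,B(y,y)$.

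The first step is to verify positivity: since $G$ is a contraction, $\vert Gx\vert\le\vert G\vert\,\vert x\vert\le\vert x\vert$, so $B(x,x)=\braket{x|x}-\braket{Gx|Gx}=\vert x\vert^2-\vert Gx\vert^2\ge 0$ for every $x\in H_1$. Hence $B$ is a positive semidefinite Hermitian form, and the second step is to invoke (or, for self-containedness, reprove) Cauchy--Schwarz for such forms. I would reprove it by expanding $0\le B(x+\lambda y,\,x+\lambda y)$ for $\lambda\in\mathbb{C}$, which gives $0\le B(x,x)+2\,\mathrm{Re}\big(\lambda B(x,y)\big)+\vert\lambda\vert^2 B(y,y)$; specialising to $\lambda=-t\,\overline{B(x,y)}$ with $t\in\mathbb{R}$ turns this into the real quadratic inequality $0\le B(x,x)-2t\,\vert B(x,y)\vert^2+t^2\vert B(x,y)\vert^2 B(y,y)$.

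The only delicate point — the closest thing to an obstacle — is that $B$ is merely semidefinite, so $B(y,y)$ might vanish and one cannot simply divide by it. If $B(y,y)>0$ one minimises the quadratic at $t=1/B(y,y)$ and reads off $\vert B(x,y)\vert^2\le B(x,x)\,B(y,y)$. If $B(y,y)=0$, the quadratic reduces to $0\le B(x,x)-2t\,\vert B(x,y)\vert^2$ for all real $t$, which forces $B(x,y)=0$ and makes the inequality trivial; the symmetric argument disposes of the case $B(x,x)=0$. Substituting the definition of $B$ back in yields the statement as printed.
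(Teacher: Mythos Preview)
Your proof is correct and takes a genuinely different route from the paper's argument. The paper introduces the self-adjoint operator $G^\dagger G$, diagonalises it to obtain eigenvalues $0\le\lambda_i\le 1$, expands $x$ and $y$ in the corresponding eigenbasis, and then applies the ordinary Cauchy--Schwarz inequality to the sequences $\sqrt{1-\lambda_i}\,\bar{x}_i$ and $\sqrt{1-\lambda_i}\,y_i$. You instead recognise $B(x,y)=\braket{x|y}-\braket{Gx|Gy}=\braket{x|(I-G^\dagger G)y}$ directly as a positive semidefinite Hermitian form and invoke (and reprove) Cauchy--Schwarz for such forms, handling the degenerate case $B(y,y)=0$ carefully. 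Your approach is more elementary: it avoids the spectral theorem entirely and hence sidesteps the paper's footnoted caveats about continuous spectra and non-separable $H_1$. The paper's version, on the other hand, makes explicit where the contraction hypothesis enters at the level of eigenvalues, which some readers may find more concrete.
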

\begin{proof}
The definition of an adjoint operator gives $\braket{Gx|Gy}=\braket{x|G^\dagger Gy}$. For the Hermitian (self-adjoint) operator $G^\dagger G$ we can choose a (generalised\footnote{For simplicity, this proof is written assuming that $G^\dagger G$ is compact and $H_1$ is separable. In the case of a continuum spectrum, replace sums by integrals; in the case of inseparable $H_1$ consider a separable subspace invariant under $G^\dagger G$ that contains $x$ and $y$.}) basis that diagonalises it, with (generalised) eigenvalues $0\leq \lambda_i\leq 1$. Expansion of $x$ and $y$ in the same basis gives
\begin{align*}
\braket{x|y}&=\sum_i \bar{x}_i y_i\\
\braket{Gx|Gy}&=\sum_i \lambda_i\bar{x}_i y_i\\
\braket{x|y}-\braket{Gx|Gy}&=\sum_i (1-\lambda_i)\bar{x}_i y_i\\
&= \sum_i (\sqrt{1-\lambda_i}\bar{x}_i) (\sqrt{1-\lambda_i}y_i).
\end{align*}
Using the Cauchy-Schwartz inequality on the last two lines we obtain
\begin{align*}
\vert\braket{x|y}-\braket{Gx|Gy}\vert^2&
\leq \sum_i (1-\lambda_i)\bar{x}_i x_i
\sum_i (1-\lambda_i)\bar{y}_i y_i\\
&= \left(\braket{x|x}-\braket{Gx|Gx} \right)
\left(\braket{y|y}-\braket{Gy|Gy} \right).
\end{align*}  
\end{proof}
\begin{prop}\label{prop:main}
The category of Hilbert spaces and linear contractions has ($\aleph_0$-)filtered colimits.
\end{prop}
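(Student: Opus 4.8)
The plan is to build the colimit by hand and verify the universal property. By \cite[Theorem 1.5]{Adamek1994} it suffices to treat a directed poset $P$ together with a functor $F\colon P\to\mathbf{Hilb}_\mathrm{c}$ (this in particular subsumes colimits of chains); write $H_i=F(i)$ and $f_{ij}\colon H_i\to H_j$ for the connecting contractions, $i\le j$, with $f_{ii}=\mathrm{id}$. First I would form the colimit $V$ of $F$ in the category of complex vector spaces, with coprojections $q_i\colon H_i\to V$; concretely $V=\bigl(\bigsqcup_i H_i\bigr)/\!\sim$, where $x_i\sim x_j$ iff $f_{ik}(x_i)=f_{jk}(x_j)$ for some $k\ge i,j$, so that every element of $V$ is $q_i(x)$ for some $i$ and $x\in H_i$. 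The real work is to equip $V$ with the correct pre-Hilbert structure.

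Second, fix $x\in H_i$. Since each $f_{kl}$ is a contraction, the net $\bigl(|f_{ik}(x)|^2\bigr)_{k\ge i}$ is non-increasing and bounded below by $0$, hence convergent. Applying Lemma \ref{lemma:innerBound} to the contraction $f_{kl}$ with arguments $f_{ik}(x)$ and $f_{jk}(y)$ gives, for $x\in H_i$, $y\in H_j$ and $i,j\le k\le l$,
\begin{equation*}
\bigl|\braket{f_{ik}(x)|f_{jk}(y)}-\braket{f_{il}(x)|f_{jl}(y)}\bigr|^2
\le\bigl(|f_{ik}(x)|^2-|f_{il}(x)|^2\bigr)\bigl(|f_{jk}(y)|^2-|f_{jl}(y)|^2\bigr).
\end{equation*}
As $k\to\infty$ both factors on the right tend to $0$, so $\bigl(\braket{f_{ik}(x)|f_{jk}(y)}\bigr)_k$ is Cauchy and I may set $\braket{q_i(x)|q_j(y)}:=\lim_k\braket{f_{ik}(x)|f_{jk}(y)}$. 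This is independent of the chosen representatives (two representatives of the same class agree from some index on, so the nets eventually coincide), it is sesquilinear and conjugate-symmetric by passing to the limit, and positive semi-definite because $\braket{q_i(x)|q_i(x)}=\lim_k|f_{ik}(x)|^2\ge 0$.

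Third, this form need not be positive-definite, so I would pass to the quotient $V/\mathcal N$ by the null space $\mathcal N=\{v\in V:\braket{v|v}=0\}$, which is a subspace by Cauchy--Schwarz for semi-definite forms and on which the form becomes a genuine inner product, and then take the Hilbert-space completion $H$. Define $u_i\colon H_i\to H$ as $H_i\xrightarrow{q_i}V\to V/\mathcal N\hookrightarrow H$; each $u_i$ is a contraction since $|u_i(x)|^2=\lim_k|f_{ik}(x)|^2\le|x|^2$, and $u_jf_{ij}=u_i$ because $q_jf_{ij}=q_i$. For the universal property, let $(W,w_i)$ be any cocone of contractions. The vector-space colimit yields a unique linear $\phi\colon V\to W$ with $\phi q_i=w_i$; for $x\in H_i$ and every $k\ge i$ we have $|w_i(x)|=|w_kf_{ik}(x)|\le|f_{ik}(x)|$, so letting $k\to\infty$ gives $|\phi q_i(x)|\le|u_i(x)|_H$. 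Hence $\phi$ annihilates $\mathcal N$, the induced map $V/\mathcal N\to W$ is a contraction, and it extends uniquely to a contraction $\hat\phi\colon H\to W$ with $\hat\phi u_i=w_i$; uniqueness follows since $\bigcup_i u_i(H_i)$ is dense in $H$. This exhibits $H$ as the colimit of $F$ in $\mathbf{Hilb}_\mathrm{c}$.

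The main obstacle is the second step: because contractions do not preserve inner products, one must show that the quantities $\braket{f_{ik}(x)|f_{jk}(y)}$ nevertheless stabilise in the limit, and this is exactly what Lemma \ref{lemma:innerBound} delivers once one knows the norms $|f_{ik}(x)|^2$ converge. A secondary subtlety, easy to overlook, is that the colimit is genuinely the completion of a quotient of the vector-space colimit: the limiting inner product on $V$ can be degenerate (so the quotient by $\mathcal N$ is essential) and $V/\mathcal N$ need not be complete (so the completion is essential).
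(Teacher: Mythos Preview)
Your proof is correct and follows essentially the same strategy as the paper: form the vector-space colimit, use the monotone convergence of norms together with Lemma~\ref{lemma:innerBound} to define the limiting inner product, quotient by the null space, complete, and verify the universal property. The only notable difference is packaging: the paper reduces further to chains and then extracts an $\omega$-subchain to work with sequences, whereas you argue directly with nets over the directed poset---a slightly cleaner route to the same destination.
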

\begin{proof}
Every filtered diagram has a directed subdiagram whose colimit coincides with the colimit of the original diagram \cite[Theorem 1.5]{Adamek1994}. A category with colimits of chains (ordinals) has directed colimits \cite[Corollary 1.7]{Adamek1994}. Furthermore, a chain-colimit-preserving functor between such categories also preserves direceted and filtered colimits. Hence, we will restrict our analysis to colimits of chains.

For a chain diagram $$F:\mathcal{D}\rightarrow \mathbf{Hilb_c}$$
with $A$, $B$, etc. and $f$, $g$, etc. denoting objects and arrows of $\mathcal{D}$. Following
\cite[Proposition 3.4.2]{Makkai1989}, form the underlying colimit in $\mathrm{Vect}$. Explicitly, vectors of $L=\mathrm{colim} F$ are equivalence classes on disjoint union of underlying sets of $FA$, formed by relating $$FA\ni x\sim Ffx\in FB,$$ where $f:A\rightarrow B$, and closing the relation to form an equivalence relation. Denote by $(A,x)$ the equivalence class of $x\in FA$. Linear combination of $(A,x)$ and $(C,y)$ is formed by linearly combining representatives in the space $FD$, where $D$ is an upper bound (or maximum) of $A$ and $C$. 

For the inner product of $(A,x)$ and $(C,y)$, consider the image set 
\begin{equation*}
N=\{(|Fax|,|Fcy|)\}_{A\xrightarrow{a} D \xleftarrow{c} C}\subset \mathbb{R}^2
\end{equation*}
The cardinality of the set $N$ is at most continuum, regardless of the cardinality, or structure, of the chain (ordinal) $\mathcal{D}$. $N$ is lower-bounded by $0$ and decreasing in both coordinates, so there is a limit point $l=(n_x,n_y)$, where $n_x=\inf(\pi_x N)$ (and $n_y=\inf(\pi_y N)$), is the infimum of the set of norms of images of $x$ (respectively $y$). Choose a (decreasing in both variables) Cauchy sequence $l_n\subset N$ whose limit is $l$ and any $\omega$-chain 
\begin{equation*}
D_0\xrightarrow{d_0}D_1
\xrightarrow{d_1}D_2\ldots
\end{equation*}
in $\mathcal{D}$ such that $l_n=(|x_n|,|y_n|)$, with $(D_n,x_n)$ and $(D_n,y_n)$ being the unique relabelings of respective classes of $(A,x)$ and $(C,y)$. Using the fact that that $\braket{x_n|x_n}$ and $\braket{y_n|y_n}$ are Cauchy sequences, and Lemma \ref{lemma:innerBound}, it follows that $\braket{x_n|y_n}$ is a Cauchy sequence, with the converging value being the definition of the inner product $\braket{(A,x)|(C,y)}$. The value of the inner product is independent of which Cauchy sequence and chain are chosen -- for any two Cauchy sequences, and corresponding chains, the sorted union Cauchy sequence and the corresponding chain must have the same limit value as both of its forming subsequences. Linearity and conjugate symmetry follow component-wise, while positive-definiteness may be lost -- it is necessary to formally equate classes of zero distance. Metric completion, using the inner product, produces the colimiting Hilbert space. 

Colimit inclusions $\iota_A$, mapping $x\in FA$ to $(A,x)$, are linear contractions. Given a cocone, that is, components $\alpha_A:FA\rightarrow H$, satisfying
\begin{equation}\label{eq:cocone}
\alpha_A=\alpha_B\circ Ff,
\end{equation}
form a mapping $\alpha:L\rightarrow H$ by defining $\alpha(A,x)=\alpha_A(x)$ -- two representatives of the same class map to the same vector in $H$ either because of (\ref{eq:cocone}) if they initially belonged to the same class, or because $\alpha_{D_n}(x-y)=0$, if classes $(A,x)$ and $(C,y)$ are identified after noting that distance between them is $0$.  Mapping on Cauchy sequences is extended by continuity. Since each $\alpha_A$ is a contraction, we have $\braket{\alpha(A,x)|\alpha(A,x)}\leq\braket{x_n|x_n}$ for any $D_n$ in the chain defining $\braket{(A,x)|(A,x)}$, so $\alpha$ is a contraction. Take $\beta :L\rightarrow H$ to be any map such that $\beta\circ\iota_A=\alpha_A$ -- after applying both sides to $x\in FA$ we conclude that $\beta(A,x)=\alpha(A,x)$. Uniqueness of the continuous extension to the whole completion gives that $\beta=\alpha$.
\end{proof}

\begin{prop}
The tensor product in $\mathbf{Hilb_c}$ preserves directed colimits - there is a unitary isomorphism 
\begin{equation*}
c:\mathrm{colim}_i H \otimes K_i
\rightarrow H\otimes \mathrm{colim}_i K_i
\end{equation*}
natural in $H$.
\end{prop}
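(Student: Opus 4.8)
The plan is to reuse the explicit chain-colimit construction from the proof of Proposition \ref{prop:main}, to realise $c$ as the canonical comparison morphism, and then to verify by hand that it is a unitary. By the reduction at the start of that proof it suffices to treat a chain diagram $F\colon\mathcal{D}\to\mathbf{Hilb_c}$, $i\mapsto K_i$, with colimit $L=\mathrm{colim}_iK_i$ (which exists by Proposition \ref{prop:main}) and colimit inclusions $\iota_i\colon K_i\to L$; likewise $i\mapsto H\otimes K_i$ is a chain diagram, so its colimit exists. Since $1_H\otimes(-)$ is a functor on $\mathbf{Hilb_c}$ sending contractions to contractions, the family $1_H\otimes\iota_i\colon H\otimes K_i\to H\otimes L$ is a cocone under $i\mapsto H\otimes K_i$, and the universal property of $\mathrm{colim}_iH\otimes K_i$ yields a unique contraction $c$ with $c\circ\iota^{\otimes}_i=1_H\otimes\iota_i$, where $\iota^{\otimes}_i\colon H\otimes K_i\to\mathrm{colim}_iH\otimes K_i$ is the $i$-th inclusion. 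It then remains to prove $c$ isometric and with dense range.

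For the isometric part, let $V$ be the (dense) subspace of $\mathrm{colim}_iH\otimes K_i$ spanned by the classes $(K_i,h\otimes x)$. Applying the construction of Proposition \ref{prop:main} to the diagram $i\mapsto H\otimes K_i$ gives $\vert(K_i,u)\vert=\inf_{a\colon K_i\to K_m}\vert(1_H\otimes Fa)(u)\vert$, while $\vert c(K_i,u)\vert=\vert(1_H\otimes\iota_i)(u)\vert$; the inequality $\leq$ between these follows from $\iota_i=\iota_m\circ Fa$ and the fact that $1_H\otimes\iota_m$ is a contraction. For the reverse inequality I would write $u$ as a finite sum $\sum_k e_k\otimes y_k$ with the $e_k$ orthonormal in $H$ (Gram--Schmidt), so that the two sides become $\sum_k\vert Fa\,y_k\vert^2$ and $\sum_k\vert\iota_iy_k\vert^2=\sum_k\inf_a\vert Fa\,y_k\vert^2$, and then observe that finitely many near-optimal legs $a_k\colon K_i\to K_{m_k}$ are simultaneously dominated by the single leg into $K_M:=\max_kK_{m_k}$, giving $\inf_a\vert(1_H\otimes Fa)(u)\vert\leq\vert(1_H\otimes\iota_i)(u)\vert$. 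Thus $\vert c(K_i,u)\vert=\vert(K_i,u)\vert$ on $V$; polarization and continuity then make $c$ an isometry on all of $\mathrm{colim}_iH\otimes K_i$.

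For surjectivity, the range of $c$ contains $c(V)=\operatorname{span}\{h\otimes\iota_i(x)\}$, i.e. the algebraic tensor product of $H$ with the dense subspace $L_0=\bigcup_i\iota_i(K_i)$ of $L$; since $\vert h\otimes\xi\vert=\vert h\vert\cdot\vert\xi\vert$ this is dense in $H\otimes L$, and an isometry of Hilbert spaces has closed range, so $c$ is onto, hence a unitary isomorphism. Naturality in $H$ is a density argument: for a contraction $\phi\colon H\to H'$ the two composites $\mathrm{colim}_iH\otimes K_i\to H'\otimes L$ in the naturality square both send $(K_i,h\otimes x)$ to $\phi(h)\otimes\iota_i(x)$, so they agree on $V$ and therefore everywhere.

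The step I expect to be the main obstacle is the middle one: upgrading the contraction $c$ supplied by the universal property to a genuine isometry, which is exactly the assertion that the colimit inner product on $\mathrm{colim}_iH\otimes K_i$ (built via the infimum/Cauchy-sequence recipe of Proposition \ref{prop:main}) coincides with the Hilbert tensor inner product on $H\otimes L$. Its delicate point is the "simultaneous infimum" passage, from each coordinate $y_k$ being eventually close to its infimal norm to one leg of the chain realising all of these at once -- this is where directedness of $\mathcal{D}$ is genuinely used, and where one must be a little careful in reducing an arbitrary element of $H\otimes K_i$ to a finite orthonormal sum before taking the maximum of the finitely many indices produced.
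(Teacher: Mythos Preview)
Your argument is correct and follows the same global outline as the paper: define $c$ from the cocone $(1_H\otimes\iota_i)$, verify it is an isometry on a dense subspace, check dense range, conclude unitarity, then naturality. The difference lies in the isometry step. The paper checks \emph{inner products of simple tensors}: since $\braket{h\otimes x_n\,|\,h'\otimes y_n}=\braket{h|h'}\braket{x_n|y_n}$, the Cauchy-sequence computation for $(A,h\otimes x)$ in $\mathrm{colim}_iH\otimes K_i$ reduces immediately to the one for $(A,x)$ in $\mathrm{colim}_iK_i$, and bilinearity plus completion finish. You instead compute \emph{norms of general elements} $(K_i,u)$ by writing $u=\sum_k e_k\otimes y_k$ with $(e_k)$ orthonormal and running a simultaneous-infimum argument over the finitely many $y_k$. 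This is sound, but it is exactly the extra work that the paper's simple-tensor check sidesteps: once inner products match on pairs of simple tensors, sesquilinearity handles all finite sums for free, and no ``near-optimal legs with a common upper bound'' step is needed. Your worry in the final paragraph is therefore legitimate for your route but absent from the paper's.

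Two minor contrasts elsewhere: for density the paper phrases the argument via Cauchy sequences and then notes $c^\dagger$ is also isometric, whereas you use the cleaner ``algebraic $H\otimes L_0$ with $L_0$ dense implies dense'' plus ``isometries have closed range''; and for naturality the paper appeals directly to the universal property of the colimit (both legs of the square are comparison maps for the same cocone), while you verify agreement on $V$ and extend by density. Both versions are fine.
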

\begin{proof}
We use the same notation as in the previous proof. The mapping $c$ is defined on colimit components by
$$c_i:H\otimes K_i\xrightarrow{1\otimes\iota_i} 
H\otimes \mathrm{colim}_i K_i,$$
where $\iota_i$ is the colimit inclusion of $K_i$.

To see $c$ is an isometry, note that the inner product of $(A,h\otimes x)$ and $(C,h'\otimes y)$ is $\braket{h,h'}\cdot\mathrm{lim}_n\braket{x_n|y_n}$, which is the same as the inner product after applying $c$ to both vectors. This extends by linearity and completion to the whole of $\mathrm{colim}_i H \otimes K_i.$

Finally, the image of $c$ is dense in
$H\otimes \mathrm{colim}_i K_i$ -- take a finite sum $s=\sum_i h_i\otimes l_i$, where
$l_i = \mathrm{lim}_{j}(A_{i,j},x_{i,j})$ is a Cauchy sequence limit, then $(A_{i,j},h_i\otimes x_{i,j})$ is a Cauchy sequence in $\mathrm{colim}_i H \otimes K_i$, with limit $p_i$, and $c(p_i)=h_i\otimes l_i$ due to continuity of $c$. Hence, $s=\sum_i c(p_i)$, the image of $c$ is dense, and the adjoint, $c^\dagger$, is also an isometry. 

Naturality follows from the universal property of the colimit.
\end{proof}

\section{Normalisation of $\omega$-chains in $\textbf{Hilb}_\mathrm{b}$}\label{sec:Hb}

Note that normalising bounded linear maps $b\mapsto b/\vert b\vert$ is not a functor, since $\vert b'\circ b\vert\leq\vert b'\vert\cdot \vert b\vert$. However, normalising $\omega$-chains works as follows.
\begin{prop}
There is a normalisation endofunctor $N$ on $\mathbf{Cat}(\omega,\mathbf{Hilb}_\mathrm{b})$, and a natural isomorphism $1\cong N$.
\end{prop}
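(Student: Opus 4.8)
The plan is to \emph{renormalise the inner products stage by stage}. Given an object $H=(H_i,b_i)_{i\in\omega}$ of $\mathbf{Cat}(\omega,\mathbf{Hilb}_\mathrm{b})$, put $m_i=\max(1,|b_i|)$ and define positive reals $c_i$ by the recursion $c_0=1$, $c_{i+1}=c_i\,m_i^{-2}$. This is the only place where it matters that the poset $\omega$ has no limit element: the recursion runs over successors only, needs no limit stage, and yields a strictly positive $c_i$ for every $i$. Let $N(H)$ be the chain with the \emph{same} underlying vector spaces and the \emph{same} linear maps $b_i$, but with the inner product on the $i$-th space replaced by $c_i\braket{-|-}$. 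Completeness is a property of the topology, which is unchanged, so each $N(H)_i$ is again a Hilbert space; and the norm of $b_i:N(H)_i\to N(H)_{i+1}$ equals $\sqrt{c_{i+1}/c_i}\,|b_i|=|b_i|/m_i=\min(1,|b_i|)\le 1$, so $N(H)$ is well defined and in fact lands in $\mathbf{Cat}(\omega,\mathbf{Hilb}_\mathrm{c})$. The zero map causes no trouble, since there $m_i=1$.

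First I would extend $N$ to morphisms. A morphism $f:H\to K$ is a family of bounded maps $f_i:H_i\to K_i$ with $f_{i+1}\circ b_i^{H}=b_i^{K}\circ f_i$; set $N(f)$ to be the same family of linear maps, now regarded between the rescaled spaces. Each $f_i$ is still bounded, with norm multiplied by $\sqrt{c_i^{K}/c_i^{H}}$, and the naturality squares persist because they are equalities of linear maps and the rescaling changes no linear map. Since $N$ acts as the identity on all vector-space level data, it preserves identities and composition on the nose, hence is an endofunctor.

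Next I would exhibit the natural isomorphism $\eta:1\Rightarrow N$. Let $(\eta_H)_i:H_i\to N(H)_i$ be the identity linear map; it has norm $\sqrt{c_i}$ and inverse of norm $c_i^{-1/2}$, so it is an isomorphism in $\mathbf{Hilb}_\mathrm{b}$ (not a unitary unless $c_i=1$). The square $(\eta_H)_{i+1}\circ b_i^{H}=b_i^{N(H)}\circ(\eta_H)_i$ holds trivially, so $\eta_H$ is a morphism of chains, invertible componentwise, hence an isomorphism in $\mathbf{Cat}(\omega,\mathbf{Hilb}_\mathrm{b})$. Naturality in $H$ reduces componentwise to $f_i\circ\mathrm{id}=\mathrm{id}\circ f_i$. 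Therefore $1\cong N$.

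The argument is essentially bookkeeping once the recursion is in place; the only subtle points are the handling of zero maps and, more instructively, the fact that the recursion for the $c_i$ is available precisely because the index category is $\omega$. For a chain of length $\omega+1$ one would need $c_\omega\le\inf_{i<\omega}\big(c_i/|b_{i\omega}|^2\big)$, an infimum that can equal $0$, so no positive $c_\omega$ — and hence no analogue of $N$ — exists; this is why the statement is confined to $\omega$-chains. One may also note that $N$ is idempotent: the maps of $N(H)$ already have norm $\le 1$, so a second application gives all $c_i=1$.
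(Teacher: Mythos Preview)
Your proof is correct and rests on the same idea as the paper's: accumulate the norm factors along the chain and rescale. The only difference is packaging. The paper keeps the Hilbert spaces $C_n$ unchanged and divides the \emph{maps} by $r(c_n)\ge |c_n|$ (your $m_n$), so the $n$-th component of $\eta$ is multiplication by $\prod_{i<n} r(c_i)^{-1}$ and $(N\alpha)_n=\alpha_n\prod_{i<n} r(c_i)/r(d_i)$. You instead keep all linear maps literally the same and absorb the scalars into the \emph{inner products}, setting $c_n=\prod_{i<n} m_i^{-2}$; conjugating by the evident isometry $H_n\to N(H)_n$, $x\mapsto x/\sqrt{c_n}$, recovers the paper's description on the nose with the choice $r(b)=\max(1,|b|)$. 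Your packaging makes functoriality and naturality of $\eta$ trivially visible (everything is the identity on underlying linear data), at the small cost that $N(H)_n$ is no longer literally $H_n$ as a Hilbert space. Your remarks on $\omega+1$ and on idempotence are correct extras not in the paper.
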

\begin{proof}
Define $r(b)>0$ to be any function on operators greater or equal than the operator norm. For example, $r(0)=1$ and $r(b)=\vert b\vert$ otherwise. Or, if continuity is preferable, $r(b)=1$ if b is a contraction and $r(b)=\vert b\vert$ otherwise.

A chain $C$ and its normalisation $N_rC$ are given by the two rows of the the following diagram
\begin{equation*}
\begin{tikzpicture}[scale=1.5]
\node (A1) at (0,1)
{$C_0$};
\node (A2) at (1,1)
{$C_1$};
\node (B1) at (0,0)
{$C_0$};
\node (B2) at (1,0)
{$C_1$};

\def \s {3};
\node (dotA) at ( 0.5 * \s + 0.5, 1) {\ldots};
\node (dotB) at ( 0.5 * \s + 0.5, 0) {\ldots};
\node (dot2A) at ( 1.5 * \s + 0.5, 1) {\ldots};
\node (dot2B) at ( 1.5 * \s + 0.5, 0) {\ldots};
\node (A3) at (\s+0,1)
{$C_n$};
\node (A4) at (\s+1,1)
{$C_{n+1}$};
\node (B3) at (\s+0,0)
{$C_n$};
\node (B4) at (\s+1,0)
{$C_{n+1}$};

\path[->,font=\scriptsize,>=angle 90]
(A1) edge node[above] {$c_0$} (A2)
(B1) edge node[below] {$\frac{c_0}{r(c_0)}$} (B2)
(A1) edge node[left] {$1$} (B1)
(A2) edge node[right] 
	{$\frac{1}{r(c_0)}$} (B2)
(A2) edge (dotA) 
(dotA) edge (A3)
(A4) edge (dot2A)
(B2) edge (dotB) 
(dotB) edge (B3)
(B4) edge (dot2B)
(A3) edge node[above] {$c_n$} (A4)
(B3) edge node[below] {$\frac{c_n}{r(c_n)}$} (B4)
(A3) edge[->] node[left] {$\prod_{i=0}^{n-1}\frac{1}{r(c_i)}$} (B3)
(A4) edge node[right] 
	{$\prod_{i=0}^{n}\frac{1}{r(c_i)}$} (B4);
\end{tikzpicture}
\end{equation*}
while the isomorphism $\eta_C$ of chains is given on chain components by vertical arrows.

A morphism of chains $\alpha:C\rightarrow D$, given by components $\alpha_n:C_n\rightarrow D_n$, is mapped to
$$(N_r\alpha)_n=\alpha_n\prod_{i=0}^{n-1}\frac{r(c_i)}{r(d_i)}.$$
$N_r\alpha$ is indeed a chain morphism.
$N_r$ respects the identity and composition because it does componentwise. Finally, $\eta$ is natural because
\begin{equation*}
\begin{tikzpicture}[xscale=3,yscale=1.5]
\node (A3) at (0,1)
{$C_n$};
\node (A4) at (1,1)
{$D_{n}$};
\node (B3) at (0,0)
{$C_n$};
\node (B4) at (1,0)
{$D_{n}$};

\path[->,font=\scriptsize,>=angle 90]
(A3) edge node[above] {$\alpha_n$} (A4)
(B3) edge node[below] {$\alpha_n\prod_{i=0}^{n-1}\frac{r(c_i)}{r(d_i)}$} (B4)
(A3) edge[->] node[left] {$\prod_{i=0}^{n-1}\frac{1}{r(c_i)}$} (B3)
(A4) edge node[right] 
	{$\prod_{i=0}^{n}\frac{1}{r(d_i)}$} (B4);
\end{tikzpicture}
\end{equation*}
commutes.
\end{proof}
However, the colimit of directed contractions from Proposition \ref{prop:main} does not have the required universal property with respect to bounded maps.
\begin{cex}
Let $H_n=\mathbb{C}$, and $e_n:H_{n}\rightarrow H_{n+1}$ defined by scaling by $1/2$. Then the colimit of the omega chain is the zero space. However, scaling by $2^n$ produces a (non-zero) cocone $b_n:H_n\rightarrow \mathbb{C}$.
\end{cex}
Similar argument is true if we restrict to colimits of diagrams of isometries.
\begin{cex}
Let $H_n=\mathbb{C}^n$, and $e_n:H_{n}\hookrightarrow H_{n+1}$ embedding of the first $n$ components. Then the colimit of the omega chain is the infinite separable Hilbert space $H_\omega$. There is a cocone $b_n:H_n\rightarrow H_\omega$ defined by scaling the $n^\mathrm{th}$ component by $n$. The induced linear map is not bounded.
\end{cex}

Both of the counterexamples have components whose norm increases without bound as we go to infinity. A straightforward generalisation of the argument in Proposition \ref{prop:main} leads to the following statement. 
\begin{prop}
Let $L$ be a directed colimit of $F$ in $\mathbf{Hilb}_\mathrm{c}$, and $\alpha:F\Rightarrow H$ a cocone in $\mathbf{Hilb}_\mathrm{b}$. The uniquely induced bounded map $L\rightarrow H$ exists if and only if the cocone $\alpha$ is globaly bounded, that is, there is $b$, such that $\vert \alpha_A\vert\leq b$ for all $\alpha_A$ in the cocone. If all colimit inclusions $\iota_A$ have trivial kernels (non-zero vectors map to non-zero vectors), then the induced linear map exists, but has no bound.
\end{prop}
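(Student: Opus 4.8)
The plan is to re-run the construction from the proof of Proposition \ref{prop:main}, replacing the contraction hypothesis on the cocone by the global bound and tracking exactly where that hypothesis was used. For the \emph{if} direction, assume $\vert\alpha_A\vert\le b$ for all $A$ and set $\gamma(A,x)=\alpha_A(x)$ on the underlying $\mathrm{Vect}$-colimit $L_0$; this is forced, and it is well defined on $L_0$ by the cocone identity (\ref{eq:cocone}), exactly as before. Given a class $(A,x)$, pick the norm-minimising $\omega$-chain $D_0\to D_1\to\cdots$ and relabellings $(D_n,x_n)$ with $\vert x_n\vert\to\vert(A,x)\vert_L$ as in that proof; since $\gamma(A,x)=\alpha_{D_n}(x_n)$ for every $n$ by (\ref{eq:cocone}), we get $\vert\gamma(A,x)\vert\le b\,\vert x_n\vert\to b\,\vert(A,x)\vert_L$. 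Applying the same estimate to $(A,x)-(C,y)$ shows that classes at zero colimit-distance have equal image, so $\gamma$ descends to the dense pre-Hilbert subspace of $L$, is bounded by $b$ there, and extends uniquely and continuously to all of $L$; uniqueness among bounded compatible maps is the usual density argument.

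The \emph{only if} direction is immediate from the fact, established in Proposition \ref{prop:main}, that the inclusions $\iota_A$ are contractions: if a bounded $\gamma:L\to H$ satisfies $\gamma\circ\iota_A=\alpha_A$, then $\vert\alpha_A\vert\le\vert\gamma\vert\cdot\vert\iota_A\vert\le\vert\gamma\vert$, so $b=\vert\gamma\vert$ is a global bound. For the last assertion, suppose $\alpha$ is not globally bounded but every $\iota_A$ has trivial kernel. Because $\vert\iota_A x\vert_L=\inf_{a\colon A\to D}\vert Fax\vert$, this hypothesis says precisely that the infimum is positive for every $x\neq0$; writing any nonzero vector of $L_0$ as a class $(A,x)$ with $x\neq0$ then shows the seminorm on $L_0$ is already a genuine norm, so no quotient is taken and $L_0$ is exactly the dense pre-Hilbert subspace of $L$. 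The universal property of the colimit \emph{in} $\mathrm{Vect}$ now yields a unique linear $\gamma_0:L_0\to H$ with $\gamma_0\circ\iota_A=\alpha_A$; if it were bounded the previous paragraph would force a global bound on $\alpha$, a contradiction, so $\gamma_0$ is the claimed linear but unbounded induced map.

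The one genuinely delicate point is the estimate in the \emph{if} direction: the cocone bound $b$ controls each $\alpha_A$ relative to the norm of $FA$, whereas the induced map must be bounded relative to the (in general strictly smaller) colimit norm, so one must evaluate $\gamma$ on representatives pushed far enough along a norm-minimising chain before passing to the bound — the step that actually uses the Cauchy-sequence description of the colimit norm, and hence, through Proposition \ref{prop:main}, Lemma \ref{lemma:innerBound}. The remaining verifications, namely linearity, well-definedness on the quotient, density and continuity, are routine.
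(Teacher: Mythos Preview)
The paper does not actually supply a proof of this proposition; it only remarks that it follows by a straightforward generalisation of the argument in Proposition~\ref{prop:main}. Your proposal carries out precisely that generalisation and is correct: you track where the contraction hypothesis on the cocone was used and replace it by the global bound, you handle the only delicate step (bounding $\gamma$ relative to the colimit norm by pushing representatives along a norm-minimising chain), and your reading of the last sentence---that the induced map lives on the dense pre-Hilbert subspace $L_0$ when the $\iota_A$ are injective---is the intended one.
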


\bibliographystyle{acm}

\end{document}